\newtheorem{theorem}{Theorem}[section]
\newtheorem{corollary}[theorem]{Corollary}
\theoremstyle{definition}
\theoremstyle{remark}
\numberwithin{equation}{section}
\newcommand{\abs}[1]{\lvert#1\rvert}
\newcommand\myeq{\mathrel{\overset{\makebox[0pt]{\mbox{\normalfont\tiny\sffamily def}}}{=}}}
\begin{document}

\title{ESTIMATES ON SINGULAR VALUES OF FUNCTIONS OF PERTURBED OPERATORS }

\author{Qinbo Liu\\
Department of Mathematics, Michigan State University\\
East Lansing, MI 48824, USA}
\thanks{Corresponding Author:\\
\indent \textit{Email address:} liuqinbo@msu.edu(Qinbo Liu).}\noindent
\date{April 25, 2016}

\begin{abstract}
This is a conitunation of \cite{A} and \cite{B}. We prove that if function $f$ belongs to the class
 $\Lambda_{\omega} \myeq \{f: \omega_{f}(\delta)\leq \text{const} \; \omega(\delta)\} $ for an arbitrary modulus of 
 continuity $\omega$,
 then
$ s_j(f(A)-f(B))\leq c\cdot \omega_{\ast}\big((1+j)^{-\frac{1}{p}}\Vert A-B \Vert_{S_{p}^l}\big)  \cdot \Vert f \Vert_{\Lambda_{\omega}}
$ for arbitrary self-adjoint operators $A$, $B$ and all $1\leq j\leq l$, where 
 $\omega_{\ast}(x) \myeq x \int_{x}^{\infty}\frac{\omega(t)}{t^2}dt \;( x>0) $. The result is then generalized for contractions, maximal dissipative operators, normal operators and $n$-tuples of commuting self-adjoint operators.
\end{abstract}
\maketitle
\tableofcontents



\section{Introduction}
In this note we study the behavior of functions of operators
 under perturbations. We are going to find estimates for the singular values $s_n(f(A)-f(B))$, 
 where both $A$ and $B$ are arbitrary self-adjoint or unitary operators.
 These results are based on the methods developed in \cite{A} and \cite{C} for estimates of operator norms
 $ \Vert f(A)-f(B) \Vert$, in these papers the authors proved if $f$ belongs to the H\"older class
  $\Lambda_{\alpha}(\mathbb{R})$
 with $0<\alpha<1$, then  $ \Vert f(A)-f(B) \Vert \leq \text{const}\;\Vert f \Vert_{\Lambda_{\alpha}} \Vert A-B \Vert^{\alpha}$ for 
 all pairs of self-adjoint or unitary operators $A$ and $B$. The authors also generalized their results to the class $\Lambda_{\omega}$,
  and obtained estimate $ \Vert f(A)-f(B) \Vert \leq \text{const}\;\Vert f \Vert_{\Lambda_{\omega}} \omega_{\ast} \Vert A-B \Vert$.
  
  In \cite{B}, it was shown that for functions $f$ in the H\"older class
  $\Lambda_{\alpha}(\mathbb{R})$ with $0<\alpha<1$ and if $1<p<\infty$, the operator $f(A)-f(B)$ belongs to $\mathbf{S}_{p/\alpha}$, whenever $A$ and $B$ are arbitrary self-adjoint operators such that $A-B \in \mathbf{S}_p$. In particular, it was proved that if $0< \alpha < 1$, then there exists a constant
  $c>0$ such that for every $l \geq 0$, $p \in [1, \infty)$, $f \in \Lambda_{\alpha}(\mathbb{R})$, and for arbitrary self-adjoint operators 
  $A$ and $B$ on Hilbert space with bounded $A-B$, the following inequality holds for every $j \leq l$:
  \begin{equation*}
   s_j(f(A)-f(B))\leq c \; \Vert f \Vert_{\Lambda_{\alpha}(\mathbb{R})}
  (1+j)^{-\frac{\alpha}{p}}\Vert A-B \Vert_{S_{p}^l}^{\alpha}(\text{see (\ref{Nor:spl}) for definition}).
   \end{equation*}
  In section \S\ref{sec:num2}, we generalize this estimate to the class $\Lambda_{\omega}$  and also obtain some lower-bound estimates for rank one perturbations which also extend the results in \cite{B}. 
  In section \S\ref{sec:num3}, similar estimates are given without proofs in case of contractions, maximal dissipative operators, normal operators and $n$-tuples of commuting self-adjoint operators.
  
  Necessary information on Space $\Lambda_{\omega}$ is given in section \S \ref{sec:num1}. We refer the reader to \cite{A} for more detailed information.

\section{Space $\Lambda_{\omega}$}
\label{sec:num1}
Let $\omega$ be a modulus of continuity, i.e., $\omega$ is a nondecreasing
continuous function on $[0, \infty)$ such that $\omega(0)=0$, $\omega(x)>0$ 
for $x>0$, and
\begin{equation*}
  \omega(x+y)\leq \omega(x)+\omega(y), x, y\in [0,\infty).
\end{equation*} 
We denote by $\Lambda_{\omega}(\mathbb{R})$ the space of functions on $\mathbb{R}$
such that
\begin{equation*}
\Vert f \Vert_{\Lambda_{\omega}(\mathbb{R})} \myeq \sup_{x \neq y}\frac{\abs{f(x)-f(y)}}{\omega(\abs{x-y})}.
\end{equation*}

The space  $\Lambda_{\omega}(\mathbb{T})$ on the unit circle can be defined in a similar way.

We continue with the class $\Lambda_{\omega}$ of functions on $\mathbb{T}$ first.
 Let $w$ be an infinitely differentiable function on $\mathbb{R}$ such that
\begin{equation}
  w \geq 0, \text{supp}\; w \subset [\frac{1}{2}, 2], \;\text{and}\; w(x)=1-w(\frac{x}{2})
 \; \text{for}\; x \in [1,2].
 \label{Dfn:doubleu}
\end{equation}
Define a $C^{\infty}$ function $v$ on $\mathbb{R}$ by 
\begin{equation}
v(x)=1 \; \text{for}\; x \in[-1,1]\; \text{and}\; v(x)=w(\abs{x}) \; \text{if} \; \abs{x}\geq 1.
\label{Dfn:smallv}
\end{equation}
Define trigonometric polynomials $W_n$, $W_{n}^{\sharp}$ and $V_n$ by
\begin{equation*}
  W_n(z)=\sum_{k\in \mathbb{Z}}w(\frac{k}{2^n})z^k,  n\geq 1, \; W_0(z)=\bar{z}+1+z,
  \; \text{and}\; W_{n}^{\sharp}(z)=\overline{W_n(z)}, n\geq 0
\end{equation*}
and
\begin{equation*}
V_n(z)=\sum_{k\in \mathbb{Z}}w(\frac{k}{2^n})z^k,  n\geq 1.
\end{equation*}
$V_n$ is called de la Vall\'{e}e Poussin type kernel.

If $f$ is a distribution on $\mathbb{T}$, we define $f_n$, $n\geq 0$ by
\begin{equation*}
f_n=f*W_n+f*W_{n}^{\sharp}, \; n\geq 1, \; \text{and} \; f_0=f*W_0,
\end{equation*}
Then $f=\sum_{n \geq 0}f_n$ and $f-f*V_n=\sum_{k=n+1}^{\infty}f_n$.

Now we proceed to the real line case. We use the same functions $w$, $v$ as in (\ref{Dfn:doubleu}), (\ref{Dfn:smallv})
, and define functions $W_n$, $W_{n}^{\sharp}$ and $V_n$ on $\mathbb{R}$ by
\begin{equation*}
  \mathcal{F}W_n(x)=w(\frac{x}{2^n}), \; \mathcal{F}W_n^{\sharp}(x)=\mathcal{F}W_n(-x), \; n \in \mathbb{Z}
  \end{equation*}
and
\begin{equation*}
\mathcal{F}V_n(x)=v(\frac{x}{2^n}),  \; n \in \mathbb{Z},
\end{equation*}
where $\mathcal{F}$ is the \textit{Fourier transform}:
\begin{equation*}
(\mathcal{F}f)(t)=\int_{\mathbb{R}}f(x)e^{-ixt}dx, \; f \in L^1.
\end{equation*} 
$V_n$ is also called de la Vall\'{e}e Poussin type kernel.

If $f$ is a tempered distribution on $\mathbb{R}$, we define $f_n$  by
\begin{equation*}
f_n=f*W_n+f*W_{n}^{\sharp}, \; n\in \mathbb{Z}.
\end{equation*}

We will use the same notation $\Lambda_{\omega}$, $W_n$, $W_{n}^{\sharp}$ and $V_n$ on $\mathbb{R}$ and on $\mathbb{T}$ in the following discussion.

In \cite{A}, it is proved that there exists a constant $c$ such that for an arbitrary modulus of
continuity $\omega$ and for an arbitrary function $f$ in $\Lambda_{\omega}$, the following inequalities hold for
all $n \in \mathbb{Z}$, in $\mathbb{R}$ case, or for all $n \geq 0$, in $\mathbb{T}$ case:
\begin{equation}
\Vert f-f*V_n \Vert_{L^{\infty}} \leq c \; \omega(2^{-n}) \Vert f \Vert_{\Lambda_{\omega}}
\label{Ine:valleeinf} 
\end{equation}
\begin{equation}
\Vert f*W_n \Vert_{L^{\infty}} \leq c \; \omega(2^{-n}) \Vert f \Vert_{\Lambda_{\omega}}, \;
\Vert f*W_n^{\sharp} \Vert_{L^{\infty}} \leq c \; \omega(2^{-n}) \Vert f \Vert_{\Lambda_{\omega}}
\label{Ine:doubleUninf}  
\end{equation}

Let $\mathscr{S}'(\mathbb{R})$
 be the space of all tempered distributions on  $\mathbb{R}$. Denote by $\mathscr{S}'_{+}(\mathbb{R})$
 the set of all $f \in \mathscr{S}'(\mathbb{R})$ such that $\text{supp }\mathcal{F}f \subset [0, \infty)$.
 Put $\big(\Lambda_{\omega}(\mathbb{R})\big)_{+} \myeq \Lambda_{\omega}(\mathbb{R})\cap \mathscr{S}'_{+}(\mathbb{R})$ and $\mathbb{C}_{+} \myeq \{z \in \mathbb{C}: \text{Im } z >0 \}$. Then a function in $\Lambda_{\omega}(\mathbb{R})$ belongs to the space $\big(\Lambda_{\omega}(\mathbb{R})\big)_{+}$ if and only if 
 it has a (unique) continuous extension to the closed upper half-plance $\text{clos } \mathbb{C}_{+}$ that is analytic in the open upper half-plane $\mathbb{C}_{+}$ with at most a polynomial growth rate at infinity.

\section{Estimates on  singular values of functions of perturbed self-adjoint and unitary operators}
\label{sec:num2}
Recall that if $T$ is a bounded linear operator on Hilbert space, then the singular values 
$s_j(T)$, $j \geq 0$, are defined by
 \[s_j(T)= \inf \{\Vert T-R \Vert: \; \text{rank} R \leq j\}.\]
For $l \geq 0$ and $p \geq 1$, we consider the norm $S_p^{l}$ (see \cite{I}) defined by
\begin{equation}
\Vert T \Vert_{S_p^{l}} \myeq \big( \sum_{j=0}^{l}(s_j(T))^p \big)^{\frac{1}{p}}.
\label{Nor:spl}
\end{equation}
It is shown in \cite{M} and \cite{B} that if $f$ is an entire function of exponential type at most $\sigma$
that is bounded on $\mathbb{R}$, and $A$, $B$ are self-adjoint operators with bounded $A-B$, then
\begin{equation}
\Vert f(A)-f(B) \Vert \leq \text{const}\; \sigma \Vert f \Vert_{L^{\infty}} \Vert A-B \Vert,
\label{Ine:opnorm}
\end{equation}
and
\begin{equation}
\Vert f(A)-f(B) \Vert_{S_p^l} \leq \text{const}\; \sigma \Vert f \Vert_{L^{\infty}} \Vert A-B \Vert_{S_p^l}.
\label{Ine:plnorm}
\end{equation}  
For the proof and more details, see \cite{A}, \cite{B}, \cite{H}, \cite{J}, \cite{K} and \cite{M}. 

Given a modulus of continuity $\omega$, define functions $\omega_{\ast}$ and $\omega_{\sharp}$ by
\begin{equation*}
\omega_{\ast}(x) = x \int_{x}^{\infty}\frac{\omega(t)}{t^2}dt \; ,\; x>0
\end{equation*}
and
\begin{equation*}
\omega_{\sharp}(x) = x \int_{x}^{\infty}\frac{\omega(t)}{t^2}dt \;+ \int_{0}^{x}\frac{\omega(t)}{t}dt \; ,\; x>0 .
\end{equation*}
In this paper, we assume that $\omega_{\sharp}$ is finite valued whenever it is used. \\
For example, if we define $\omega$ by 
\begin{equation*} \omega(x)=x^{\alpha},\; x>0, \;0<\alpha<1, 
\end{equation*}
then $\omega_{\sharp}(x) \leq \textnormal{const }\omega(x)$.\\
It is well known(see \cite{F}, Ch.3, Theorem 13.30) that if $\omega$ is a modulus of continuity, then the Hilbert transform maps $\Lambda_{\omega}$ into itself
if and only if $\omega_{\sharp}(x) \leq \textnormal{const }\omega(x)$.

\begin{theorem}
There exists a constant $c>0$ such that for every modulus of continuity $\omega$, every $f$ in
$\Lambda_{\omega}(\mathbb{R})$ and for arbitrary self-adjoint operators $A$ and $B$,
the following inequality holds for all $l$ and for all $j$, $1\leq j\leq l :$
\begin{equation}
   s_j(f(A)-f(B))\leq c\cdot
   \omega_{\ast}\big((1+j)^{-\frac{1}{p}}\Vert A-B \Vert_{S_{p}^l}\big)
   \cdot \Vert f \Vert_{\Lambda_{\omega}}.
   \label{Ine:svalue}
\end{equation}
\label{Thm:mainthm}
\end{theorem}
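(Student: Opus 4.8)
The plan is to decompose $f$ at a single dyadic scale $N$ chosen to balance a smooth (high-frequency) part, controlled in operator norm, against a rough (low-frequency) part, controlled in the $S_p^l$ norm. Throughout I normalize $\|f\|_{\Lambda_\omega}=1$, I assume $\omega_\ast$ is finite (otherwise (\ref{Ine:svalue}) is vacuous), and I abbreviate $\Omega=\|A-B\|_{S_p^l}$ and $x=(1+j)^{-1/p}\Omega$. I fix $N\in\mathbb{Z}$ with $2^{-N}\le x<2^{-N+1}$, so that $2^{-N}\asymp x$.

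First I would split $f=f\ast V_N+(f-f\ast V_N)$ and correspondingly write $f(A)-f(B)=G+H$, where $H=(f-f\ast V_N)(A)-(f-f\ast V_N)(B)$ and $G=(f\ast V_N)(A)-(f\ast V_N)(B)$. Since $f-f\ast V_N$ is a bounded function, $\|H\|\le 2\|f-f\ast V_N\|_{L^\infty}\le 2c\,\omega(2^{-N})$ by (\ref{Ine:valleeinf}); and because $\omega_\ast(x)=x\int_x^\infty t^{-2}\omega(t)\,dt\ge x\,\omega(x)\int_x^{2x}t^{-2}\,dt\ge\tfrac12\omega(x)\ge\tfrac12\omega(2^{-N})$, this gives $\|H\|\le 4c\,\omega_\ast(x)$, i.e. the high-frequency part is already of the desired size and will be absorbed with zero rank.

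For the low-frequency part I would use $f\ast V_N=\sum_{n\le N}f_n$, so that $G=\sum_{n\le N}(f_n(A)-f_n(B))$. Each $f_n$ is an entire function of exponential type at most $2^{n+1}$ with $\|f_n\|_{L^\infty}\le c\,\omega(2^{-n})$ by (\ref{Ine:doubleUninf}), so the Bernstein-type inequality (\ref{Ine:plnorm}) yields $\|f_n(A)-f_n(B)\|_{S_p^l}\le c'\,2^n\omega(2^{-n})\,\Omega$. The hypothesis $\omega_\ast(x)<\infty$ is exactly what makes $\sum_{n\le N}2^n\omega(2^{-n})$ convergent, so the series for $G$ converges absolutely in $S_p^l$, and the triangle inequality in $S_p^l$ gives $\|G\|_{S_p^l}\le c'\Omega\sum_{n\le N}2^n\omega(2^{-n})$. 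Comparing this geometric sum with the integral $\int_{2^{-N}}^\infty t^{-2}\omega(t)\,dt$ (each dyadic block $[2^{-n},2^{-n+1}]$ contributes a comparable amount) bounds it by $c''\Omega\int_{2^{-N}}^\infty t^{-2}\omega(t)\,dt$, and since $2^{-N}\ge x/2$ one checks $\int_{2^{-N}}^\infty t^{-2}\omega(t)\,dt\le 3\int_x^\infty t^{-2}\omega(t)\,dt=3\,\omega_\ast(x)/x$. Hence $\|G\|_{S_p^l}\le 3c''\Omega\,\omega_\ast(x)/x$.

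Finally, because $1\le j\le l$ the monotonicity of singular values gives $s_j(G)\le(1+j)^{-1/p}\|G\|_{S_p^l}$, and using $x=(1+j)^{-1/p}\Omega$ the factor $\Omega/x$ cancels to leave $s_j(G)\le 3c''\,\omega_\ast(x)$. Combining the two pieces through $s_j(f(A)-f(B))\le s_0(H)+s_j(G)=\|H\|+s_j(G)$ yields (\ref{Ine:svalue}). The step I expect to require the most care is the low-frequency estimate: one must avoid bounding the high-frequency tail term by term in the operator norm (which would produce the divergent, or at best $\omega_\sharp$-sized, quantity $\int_0^x t^{-1}\omega(t)\,dt$), and instead handle the whole tail at once via the de la Vall\'ee Poussin bound (\ref{Ine:valleeinf}); dually, for $G$ one must use the additivity of the $S_p^l$ norm together with $s_j(\cdot)\le(1+j)^{-1/p}\|\cdot\|_{S_p^l}$ rather than the lossy subadditivity $s_{\sum j_n}(\sum T_n)\le\sum s_{j_n}(T_n)$, which would cost an extra power of the number of active scales. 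Verifying the $S_p^l$-convergence of the defining series for $G$ and its identification with $(f\ast V_N)(A)-(f\ast V_N)(B)$ through the functional calculus is routine once $\omega_\ast$ is known to be finite.
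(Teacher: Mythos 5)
Your proposal is correct and takes essentially the same route as the paper's own proof: your $H$ and $G$ are exactly the paper's $Q_N=(f-f*V_N)(A)-(f-f*V_N)(B)$ and $R_N=\sum_{n\leq N}\bigl(f_n(A)-f_n(B)\bigr)$, with the same cutoff choice $1\leq 2^N(1+j)^{-1/p}\Vert A-B\Vert_{S_p^l}\leq 2$, the same use of (\ref{Ine:valleeinf}), (\ref{Ine:doubleUninf}) and the Bernstein-type inequality (\ref{Ine:plnorm}), and the same final step $s_j(R_N)\leq (1+j)^{-1/p}\Vert R_N\Vert_{S_p^l}$. The only differences are cosmetic: you fix $N$ at the outset rather than at the end, and you verify the dyadic bound $\sum_{n\leq N}2^n\omega(2^{-n})\leq \mathrm{const}\; 2^N\omega_{\ast}(2^{-N})$ by integral comparison where the paper cites \cite{A}.
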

\begin{proof}
  $A$ and $B$ can be taken as bounded operators(see \cite{C}, Lemma 4.4), then we may
  further assume $f$ is bounded. 
  Let $R_N=\sum_{n=-\infty}^{N}(f_n(A)-f_n(B))$, $Q_N=(f-f*V_N)(A)-(f-f*V_N)(B)$. Here $f_n$
  and the de la Vall\'ee Pouss\'in type kernel $V_N$ are defined as in \S\ref{sec:num1}.
  Then $f(A)-f(B)=R_N+Q_N$, with convergence in the uniform operator topology as shown in \cite{A}.
  Note that for any integer $m \in \mathbb{Z}$, functions $f_m$ and $f-f*V_m$ are entire functions
  of exponential type at most $2^{m+1}.$ Thus it follows from (\ref{Ine:opnorm}),
   (\ref{Ine:plnorm}), (\ref{Ine:valleeinf}), and (\ref{Ine:doubleUninf}) that
  \[
    \Vert Q_N\Vert\leq c\cdot \omega(2^{-N})
  \cdot \Vert f \Vert_{\Lambda_{\omega}},
  \]  
  and
  \begin{align*}
  \Vert R_N\Vert_{S_{p}^l} 
  & \leq \sum_{n=-\infty}^{N}\Vert f_n(A)-f_n(B)\Vert_{S_{p}^l}\\
  & \leq c\cdot  \sum_{n=-\infty}^{N}\big ( 2^n \cdot \Vert f_n \Vert_{L^{\infty}} \big ) \cdot \Vert A-B\Vert_{S_{p}^l}\\
  & \leq c\cdot 2^N \cdot \omega_{\ast}(2^{-N})
  \cdot \Vert A-B\Vert_{S_{p}^l} \cdot \Vert f \Vert_{\Lambda_{\omega}} \text{(see \cite{A})}
  \end{align*}
Then 
  \begin{eqnarray*}
    s_j(f(A)-f(B)) &\leq & s_j(R_N)+\Vert Q_N \Vert \leq (1+j)^{-1/p}
    \cdot \Vert R_N\Vert_{S_{p}^l}+\Vert Q_N \Vert \\
    &\leq & c\cdot \big[(1+j)^{-\frac{1}{p}}\cdot 2^N 
    \cdot \omega_{\ast}(2^{-N})\Vert A-B \Vert_{S_{p}^l}+
    \omega(2^{-N})\big]\\
   &&\cdot \Vert f \Vert_{\Lambda_{\omega}}
  \end{eqnarray*}
Take $N$ such that $1\leq (1+j)^{-\frac{1}{p}}
\cdot 2^N\cdot \Vert A-B \Vert_{S_{p}^l}\leq 2$ and use the fact that
$\omega(t)\leq\omega_{\ast}(t)$ for any $t>0$, we get (\ref{Ine:svalue}).
\end{proof}

\begin{theorem}
There exists a constant $c>0$ such that for every modulus of continuity $\omega$, every $f$ in
$\Lambda_{\omega}(\mathbb{T})$ and for arbitrary unitary operators $U$ and $V$,
the following inequality holds for all $l$ and for all $j$, $1\leq j\leq l :$
\begin{equation}
   s_j(f(U)-f(V))\leq c\cdot
   \omega_{\ast}\big((1+j)^{-\frac{1}{p}}\Vert U-V \Vert_{S_{p}^l}\big)
   \cdot \Vert f \Vert_{\Lambda_{\omega}}.
   \label{Ine:svalueU}
\end{equation}
\end{theorem}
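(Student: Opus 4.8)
The plan is to mirror the proof of Theorem~\ref{Thm:mainthm} almost verbatim, transporting the argument from the real line to the circle. The three ingredients that made the self-adjoint argument work all have exact counterparts in the unitary setting: the Littlewood--Paley type decomposition $f=\sum_{n\ge 0}f_n$ on $\mathbb{T}$; the approximation estimates \eqref{Ine:valleeinf} and \eqref{Ine:doubleUninf}, which the excerpt explicitly records as valid for all $n\ge 0$ in the $\mathbb{T}$ case; and the operator-Lipschitz inequalities \eqref{Ine:opnorm} and \eqref{Ine:plnorm}, which hold for trigonometric polynomials applied to unitary operators, with the degree of the polynomial playing the role of the exponential type.

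First I would observe that unitary operators are automatically bounded, so no reduction on the operators is required; as in the self-adjoint case one may further assume $f$ bounded so that all the series below converge in the uniform operator topology. Next, fix $N\ge 0$ and split
\[
f(U)-f(V)=R_N+Q_N,\qquad R_N=\sum_{n=0}^{N}\bigl(f_n(U)-f_n(V)\bigr),\quad Q_N=(f-f*V_N)(U)-(f-f*V_N)(V),
\]
the decomposition being legitimate because $f=\sum_{n\ge 0}f_n$ and $f-f*V_N=\sum_{k>N}f_k$ on $\mathbb{T}$, again with convergence in the operator norm as in \cite{A}. The essential structural difference from Theorem~\ref{Thm:mainthm} is merely that the summation $\sum_{n=-\infty}^{N}$ of the real-line case is replaced here by $\sum_{n=0}^{N}$.

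Each $f_n$ is a trigonometric polynomial of degree at most $2^{n+1}$, so the unitary versions of \eqref{Ine:opnorm}--\eqref{Ine:plnorm} give $\|f_n(U)-f_n(V)\|_{S_p^l}\le c\,2^{n}\|f_n\|_{L^\infty}\|U-V\|_{S_p^l}$, while \eqref{Ine:doubleUninf} bounds $\|f_n\|_{L^\infty}\le c\,\omega(2^{-n})\|f\|_{\Lambda_\omega}$. Summing over $0\le n\le N$ and invoking the definition of $\omega_*$ exactly as in the proof of Theorem~\ref{Thm:mainthm} yields
\[
\|R_N\|_{S_p^l}\le c\,2^{N}\,\omega_*(2^{-N})\,\|U-V\|_{S_p^l}\,\|f\|_{\Lambda_\omega},\qquad \|Q_N\|\le c\,\omega(2^{-N})\,\|f\|_{\Lambda_\omega},
\]
the second bound coming from \eqref{Ine:valleeinf}. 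Combining $s_j(f(U)-f(V))\le s_j(R_N)+\|Q_N\|$ with the elementary inequality $s_j(R_N)\le (1+j)^{-1/p}\|R_N\|_{S_p^l}$, then choosing $N$ so that $1\le (1+j)^{-1/p}2^N\|U-V\|_{S_p^l}\le 2$ and using $\omega(t)\le\omega_*(t)$, produces \eqref{Ine:svalueU}.

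The only genuine obstacle is verifying that the operator-Lipschitz estimates \eqref{Ine:opnorm} and \eqref{Ine:plnorm} survive the passage from entire functions of exponential type on $\mathbb{R}$ to trigonometric polynomials on $\mathbb{T}$ applied to unitaries; this is where the real content lies. Concretely one must check (or cite from \cite{A}, \cite{C}, \cite{M}) that a trigonometric polynomial $g$ of degree $m$ satisfies $\|g(U)-g(V)\|\le \text{const}\,m\,\|g\|_{L^\infty}\,\|U-V\|$ together with the corresponding $S_p^l$ bound. Granting those inequalities, every remaining step is an exact transcription of the self-adjoint proof.
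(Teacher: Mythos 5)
Your reduction, decomposition, and the bounds on $R_N$ and $Q_N$ are all correct and coincide with the paper's argument, but your final step hides a genuine gap. On the circle the Littlewood--Paley pieces $f_n$ exist only for $n\ge 0$, so the $N$ in $R_N=\sum_{n=0}^{N}(f_n(U)-f_n(V))$ must be a \emph{nonnegative} integer --- you noted this structural difference yourself but did not follow it through. Your concluding move, ``choose $N$ so that $1\le(1+j)^{-1/p}2^{N}\Vert U-V\Vert_{S_p^l}\le 2$'', is only possible when $(1+j)^{-1/p}\Vert U-V\Vert_{S_p^l}\le 2$: if instead $(1+j)^{-1/p}\Vert U-V\Vert_{S_p^l}>2$, then already for $N=0$ the quantity exceeds $2$, and it only increases with $N$, so no admissible $N$ exists. (In the self-adjoint proof this problem never arises, because the decomposition runs over all $n\in\mathbb{Z}$ and $N$ may be taken negative.) The bad case is not vacuous: for example $V=-U$ gives $\Vert U-V\Vert_{S_p^l}=2(1+l)^{1/p}$, so $(1+j)^{-1/p}\Vert U-V\Vert_{S_p^l}>2$ whenever $j<l$. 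Hence your proof, as written, does not cover all $j\le l$.

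This missing case is precisely where the paper's proof deviates from a verbatim transcription of Theorem~\ref{Thm:mainthm}: when $(1+j)^{-1/p}\Vert U-V\Vert_{S_p^l}>2$ it abandons the decomposition and argues directly. Since $U$ and $V$ are unitary, $\Vert U-V\Vert\le 2$, and the operator-norm estimate of \cite{A} quoted in the introduction gives
\begin{equation*}
s_j(f(U)-f(V))\le\Vert f(U)-f(V)\Vert\le c\,\omega_{\ast}(\Vert U-V\Vert)\Vert f\Vert_{\Lambda_{\omega}}\le c\,\omega_{\ast}(2)\Vert f\Vert_{\Lambda_{\omega}}\le c\,\omega_{\ast}\big((1+j)^{-\frac{1}{p}}\Vert U-V\Vert_{S_p^l}\big)\Vert f\Vert_{\Lambda_{\omega}},
\end{equation*}
the last inequality because $\omega_{\ast}(x)=\int_{1}^{\infty}\omega(xs)s^{-2}\,ds$ is nondecreasing. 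Adding this case to your argument closes the gap. By contrast, the issue you flagged as ``the only genuine obstacle'' --- the Bernstein-type inequalities \eqref{Ine:opnorm} and \eqref{Ine:plnorm} for trigonometric polynomials of unitaries --- is indeed available in the literature (\cite{A}, \cite{B}, \cite{L}; the contraction version is quoted in \S\ref{sec:num3}) and is the part of your argument that needs no repair beyond a citation.
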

\begin{proof}
If $(1+j)^{-\frac{1}{p}}
\cdot \Vert U-V \Vert_{S_{p}^l} \leq 2$, the proof is similar to Theorem \ref{Thm:mainthm}
with $R_N=\sum_{n=0}^{N}(f_n(U)-f_n(U))$; 
if $(1+j)^{-\frac{1}{p}}
\cdot \Vert U-V \Vert_{S_{p}^l}>2$, then
\[
 s_j(f(U)-f(V))\leq  \Vert f(U)-f(V)\Vert\leq
 c\cdot \omega_{\ast}(\Vert U-V\Vert)\cdot
 \Vert f \Vert_{\Lambda_{\omega}}
 \leq  c\cdot \omega_{\ast}(2)\cdot
 \Vert f \Vert_{\Lambda_{\omega}}. 
\]
\end{proof}

\begin{corollary}
Let $\omega$ be a modulus of continuity such that
\[\omega_{\ast}(x) \leq \textnormal{const }\omega(x),\;\; x \geq 0.\]
Then for an arbitrary function $f \in \Lambda_{\omega}(\mathbb{R})$ and for arbitrary self-adjoint operators $A$ and $B$,
the following inequality holds for all $l$ and for all $j$, $1\leq j\leq l :$
\begin{equation*}
   s_j(f(A)-f(B))\leq \textnormal{const }
   \omega\big((1+j)^{-\frac{1}{p}}\Vert A-B \Vert_{S_{p}^l}\big)
    \Vert f \Vert_{\Lambda_{\omega}}.
  \end{equation*}
\end{corollary}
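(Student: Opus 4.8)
The plan is to obtain the Corollary directly from Theorem \ref{Thm:mainthm}, since the hypothesis $\omega_{\ast}(x)\leq\textnormal{const }\omega(x)$ is precisely what converts the estimate proved there into the one claimed here. First I would fix self-adjoint operators $A$ and $B$, an index $j$ with $1\leq j\leq l$, and set $x\myeq(1+j)^{-1/p}\Vert A-B\Vert_{S_p^l}$. Applying Theorem \ref{Thm:mainthm} then yields
\[
s_j(f(A)-f(B))\leq c\cdot\omega_{\ast}(x)\cdot\Vert f\Vert_{\Lambda_{\omega}}.
\]

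Next I would substitute the hypothesis into this bound. Since $\omega_{\ast}(x)\leq\textnormal{const }\omega(x)$ is assumed to hold for all $x\geq 0$, evaluating at the specific value $x=(1+j)^{-1/p}\Vert A-B\Vert_{S_p^l}$ gives
\[
s_j(f(A)-f(B))\leq c\cdot\textnormal{const}\cdot\omega\big((1+j)^{-1/p}\Vert A-B\Vert_{S_p^l}\big)\cdot\Vert f\Vert_{\Lambda_{\omega}},
\]
and absorbing the two absolute constants into a single one produces the desired inequality.

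The argument is essentially a one-line substitution, so I do not expect any serious obstacle; the only points worth checking are degenerate. If $A=B$ then $x=0$, $f(A)-f(B)=0$, and both sides vanish, so the inequality is trivial; for $x>0$ the bound $\omega_{\ast}(x)\leq\textnormal{const }\omega(x)$ is available by hypothesis and nothing further is required. Thus the full quantitative content of the Corollary is carried by Theorem \ref{Thm:mainthm}, while the assumption on $\omega$ serves only to replace $\omega_{\ast}$ by $\omega$ in the conclusion. I would remark in passing that the model example $\omega(x)=x^{\alpha}$ with $0<\alpha<1$ satisfies $\omega_{\ast}(x)\leq\textnormal{const }\omega(x)$, so this Corollary specializes to the H\"older-class estimate from \cite{B} quoted in the introduction.
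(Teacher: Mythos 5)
Your proof is correct and is exactly the argument the paper intends: the corollary is stated without proof precisely because it follows from Theorem \ref{Thm:mainthm} by the one-line substitution $\omega_{\ast}(x)\leq\textnormal{const }\omega(x)$ at $x=(1+j)^{-1/p}\Vert A-B\Vert_{S_p^l}$, with the constants absorbed. Your handling of the degenerate case $A=B$ (where $\omega_{\ast}(0)$ is not defined) is a reasonable extra precaution, and the closing remark correctly identifies the H\"older case $\omega(x)=x^{\alpha}$ as the motivating specialization.
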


Let $H$, $\mathcal{H}$ be the Hankel operators defined in \cite{B}.

\begin{theorem}
Let $\omega$ be a modulus of continuity on $\mathbb{T}$. There exist unitary operators
$U$, $V$ and a real function $h$ in $\Lambda_{\omega_{\sharp}}(\mathbb(T))$  such that
\[
  \textnormal{rank}(U-V)=1 \qquad and\qquad  s_m(h(U)-h(V))\geq\omega\big((1+m)^{-1}\big).
\]
\end{theorem}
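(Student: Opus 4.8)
The plan is to prove sharpness by exhibiting one explicit example and to route the whole argument through Hankel operators, precisely the objects $H$, $\mathcal{H}$ recalled from \cite{B} just before the statement. First I would invoke the model of \cite{B}: for a suitable pair of unitary operators $U$, $V$ with $\operatorname{rank}(U-V)=1$ and every real $h\in\Lambda_{\omega_{\sharp}}(\mathbb{T})$, the difference $h(U)-h(V)$ dominates the Hankel operator $H_h$ with symbol $h$, in the sense that $s_m(h(U)-h(V))\ge c\,s_m(H_h)$ for all $m$. Granting this reduction, it suffices to build a single real $h\in\Lambda_{\omega_{\sharp}}(\mathbb{T})$ whose Hankel operator obeys $s_m(H_h)\ge\omega((1+m)^{-1})$. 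The explicit choice I would make is the lacunary function
\[
 h(\zeta)=\sum_{l\ge0}\omega(2^{-l})\bigl(\zeta^{2^{l}}+\bar\zeta^{2^{l}}\bigr),
\]
whose only nonzero Fourier coefficients sit at the dyadic frequencies $\pm2^{l}$ and equal $\omega(2^{-l})$. The standing assumption that $\omega_{\sharp}$ is finite forces $\sum_l\omega(2^{-l})<\infty$, so $h$ is a genuine continuous real function, and in the standard basis its Hankel matrix $\Gamma=\{\hat h(-(j+k+1))\}_{j,k\ge0}$ carries the entry $\omega(2^{-l})$ on each antidiagonal $\{j+k=2^{l}-1\}$ and vanishes elsewhere.

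For the lower bound on $s_m(H_h)$, given $m$ I would pick $l_0$ with $2^{l_0-1}>m$, for instance $l_0=\lceil\log_2(m+1)\rceil+1$, take the coordinate subspace $L=\operatorname{span}\{e_0,\dots,e_m\}$, and examine the rows $j\in[2^{l_0}-1-m,\,2^{l_0}-1]$. Because $2^{l_0-1}>m$, among the columns $0\le k\le m$ the antidiagonal $\{j+k=2^{l}-1\}$ of any scale $l\ne l_0$ meets none of these rows, so on them only the scale-$l_0$ antidiagonal is active: as $j$ runs over the chosen rows, $\Gamma$ sends $e_{2^{l_0}-1-j}\in L$ to $\omega(2^{-l_0})$ times an orthonormal system, whence $\|H_h x\|\ge\omega(2^{-l_0})\|x\|$ for every $x\in L$. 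By the Courant--Fischer description $s_m(T)=\max_{\dim L=m+1}\min_{x\in L,\,\|x\|=1}\|Tx\|$ this gives $s_m(H_h)\ge\omega(2^{-l_0})$, and since $2^{-l_0}\ge(4(m+1))^{-1}$ subadditivity of $\omega$ yields $\omega(2^{-l_0})\ge\tfrac14\omega((1+m)^{-1})$; after rescaling $h$ by a fixed constant the clean bound follows.

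It remains to check that $h\in\Lambda_{\omega_{\sharp}}$, and this is also where the modulus $\omega_{\sharp}$ enters intrinsically. Using the standard two-sided estimate for the modulus of continuity of a lacunary series, $\omega_h(\delta)\asymp\sum_{2^{l}\ge1/\delta}\omega(2^{-l})+\delta\sum_{2^{l}<1/\delta}2^{l}\omega(2^{-l})$, I would compare each sum with an integral: the first is comparable to $\int_0^{\delta}\omega(t)t^{-1}\,dt$ and the second to $\delta\int_{\delta}^{\infty}\omega(t)t^{-2}\,dt=\omega_{\ast}(\delta)$, the lower-order discrepancies being $O(\delta)=O(\omega(\delta))$ by subadditivity. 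Their sum is exactly $\omega_{\sharp}(\delta)$, so $\omega_h\asymp\omega_{\sharp}$ and $h\in\Lambda_{\omega_{\sharp}}(\mathbb{T})$.

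I expect the main obstacle to be the competition between the two requirements rather than either one in isolation: forcing $s_m(H_h)\ge\omega((1+m)^{-1})$ demands coefficients as large as $\omega(2^{-l})$, while smoothness pushes them down, and the dyadic choice above is exactly the balance point. The smoothness computation then shows that this optimal balance places $h$ in $\Lambda_{\omega_{\sharp}}$ and \emph{not}, in general, in the smaller class $\Lambda_{\omega}$ — which is precisely why the theorem must assert membership in $\Lambda_{\omega_{\sharp}}$. The one genuinely external ingredient is the first step: producing $U$, $V$ with $\operatorname{rank}(U-V)=1$ while transferring the Hankel lower bound intact to $h(U)-h(V)$; confirming that the rank-one structure survives this passage is the delicate part of the \cite{B} model and is where I would concentrate the care.
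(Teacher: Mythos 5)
Your proposal is correct and follows essentially the same route as the paper: the same rank-one perturbation pair of unitaries from \cite{B} reducing the problem to a Hankel matrix, the same lacunary symbol with coefficients $\omega(\cdot)$ at lacunary frequencies (you use base $2$, the paper base $4$), the same isolation of a single antidiagonal to get the singular-value lower bound, and the same dyadic/lacunary estimate placing the function in $\Lambda_{\omega_{\sharp}}(\mathbb{T})$. The remaining differences --- your Courant--Fischer phrasing versus the paper's compression $T_n=P_n\Gamma_g P_n$ with $s_j(P_n\Gamma_g P_n)\leq s_j(\Gamma_g)$, and the resulting numerical constants absorbed by rescaling $h$ --- are cosmetic.
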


\begin{proof}
  Consider the operators $U$ and $V$ on space $L_2(\mathbb{T})$ with respect
  to the normalized Lebesgue measure on $\mathbb{T}$ defined by (see~\cite{B})
  \[
  Uf=\bar{z}f\; and \; Vf=\bar{z}f-2(f,\; 1)\bar{z},\; f \in L^2 .
  \]
  For $f\in C(\mathbb{T})$, we have
  \begin{equation*}
   \big((f(U)-f(V))z^j,z^k\big)= -2
   \begin{cases}
   \hat{f}(j-k), &\text{if } j\geq 0, k<0 ;\\
   \hat{f}(j-k), &\text{if } j<0, k\geq 0 ; \\
   0, &\text{otherwise}.
   \end{cases} 
  \end{equation*}
  Define function $g$ by
  \[
  g(\zeta)=\sum_{n=1}^{\infty}
  \omega(4^{-n})(\zeta^{4^{n}}+\bar{\zeta}^{4^{n}}),\qquad
  \zeta \in \mathbb{T}.  
  \]
  Then we have
  \begin{equation*}
   \Vert g*W_n \Vert_{L^{\infty}} \leq \text{const} \; \omega(2^{-n}) , \;
   \Vert g*W_n^{\sharp} \Vert_{L^{\infty}} \leq \text{const} \; \omega(2^{-n}) \; ,\; n\geq 0. 
     \end{equation*}
  Let $\xi$, $\eta$ be two arbitrarily different fixed points on $\mathbb{T}$, choose $N \geq 0$ such that
  $\frac{1}{2} \leq \frac{2^{-N}}{\abs{ \xi- \eta }} \leq 1$, then
  
  \begin{align*}
   \abs{g(\xi)-g(\eta)}
   &\leq \sum_{n=0}^{N} \abs{g_n(\xi)-g_n(\eta)}+\abs{(g-g*V_N)(\xi)-(g-g*V_N)(\eta)}\\
   & \leq \sum_{n=0}^{N} \abs{g_n(\xi)-g_n(\eta)}+2 \sum_{n=N+1}^{\infty}\Vert g_n \Vert_{L^{\infty}} \\
   & \leq \text{const} \sum_{n=0}^{N}2^n \Vert g_n \Vert_{L^{\infty}}\abs{ \xi- \eta }+2 \sum_{n=N+1}^{\infty}\Vert g_n \Vert_{L^{\infty}} \\
   & \leq \text{const} \sum_{n=0}^{N}2^n \omega(2^{-n})\abs{ \xi- \eta }+\text{const}\sum_{n=N+1}^{\infty}\omega(2^{-n})\\
   & \leq \text{const}\; \omega_{\ast}(\abs{ \xi- \eta })+\text{const}\int_{0}^{2^{-N}}\frac{\omega(t)}{t}dt\\
   & \leq \text{const} \; \omega_{\sharp}(\abs{ \xi- \eta }).
 \end{align*}

  Consider the matrix $\Gamma_{g}=\{\hat{g}(-j-k)\}_{j\geq 1,k\geq 0}=\{\hat{g}(j+k)\}_{j\geq 1,k\geq 0}$.\\
  Let $n\geq 1$. Define matrix
   $T_n=\{\hat{g}(j+k+4^{n-1}+1)\}_{0\leq j,k\leq 3\cdot 4^{n-1}}$,
   then
   \begin{equation*}
      T_n=
      \begin{bmatrix}
       {}           &   {}      &       {}            & \omega(4^{-n})  \\
       {}           &   {}      &  \omega(4^{-n})    & {}               \\
       {}           &   \iddots  &       {}            & {}                 \\
   \omega(4^{-n})  &   {}      &       {}            & {}  
      \end{bmatrix}.
   \end{equation*} 
   If $R$ is any matrix with the same size of $T_n$ such that $\text{rank}(R)<3\cdot 4^{n-1}$, then
   $\Vert T_n-R \Vert \geq \omega(4^{-n})$. It follows that $s_j(T_n)\geq\omega(4^{-n})$ for $j<3\cdot 4^{n-1}$.  
   For each $T_n$, there is some orthogonal projection $P_n$ such that $T_n=P_n\Gamma_gP_n$, hence
   $s_j(\Gamma_g)\geq s_j(T_n)\geq \omega(4^{-n})$ for all $n$ and for all $j$, $j<3\cdot 4^{n-1}$. Thus for all $j\geq 0$, we have
   \[
   s_j(\Gamma_g)\geq \omega\big(\frac{3}{16}\cdot (j+1)^{-1}\big)
   \geq \frac{3}{32}\cdot\omega \big((j+1)^{-1}\big).
   \]
   To complete the proof, it suffices to take $h=\frac{32}{3}g$.
 \end{proof}
\begin{corollary}
Let $\omega$ be a modulus of continuity such that
\[\omega_{\sharp}(x) \leq \textnormal{const }\omega(x),\;\; 0 \leq x \leq 2.\]
There exist unitary operators
$U$, $V$ and a real function $h$ in $\Lambda_\omega\mathbb(T)$  such that
\[
  \textnormal{rank}(U-V)=1 \qquad and\qquad  s_m(h(U)-h(V))\geq\omega\big((1+m)^{-1}\big).
\]
\end{corollary}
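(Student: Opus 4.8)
The plan is to deduce this directly from the preceding Theorem by comparing the two H\"older-type classes $\Lambda_{\omega_\sharp}(\mathbb{T})$ and $\Lambda_{\omega}(\mathbb{T})$. First I would invoke the Theorem for the given modulus of continuity $\omega$: this produces unitary operators $U$, $V$ and a real function $h \in \Lambda_{\omega_\sharp}(\mathbb{T})$ with $\textnormal{rank}(U - V) = 1$ and $s_m(h(U) - h(V)) \geq \omega\big((1 + m)^{-1}\big)$. The operators $U$, $V$ and the function $h$ are exactly the ones we want to exhibit; the only remaining point is to check that $h$ in fact lies in the smaller class $\Lambda_{\omega}(\mathbb{T})$, and not merely in $\Lambda_{\omega_\sharp}(\mathbb{T})$.

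The key observation is that for any two points $\xi, \eta$ on the unit circle the chordal distance satisfies $\abs{\xi - \eta} \leq 2$, so in evaluating a $\Lambda_{\omega}$- or $\Lambda_{\omega_\sharp}$-seminorm on $\mathbb{T}$ only the values of $\omega$ and $\omega_\sharp$ on the interval $[0, 2]$ are ever used. Hence I would use the hypothesis $\omega_\sharp(x) \leq \textnormal{const }\omega(x)$ for $0 \leq x \leq 2$ to estimate, for $\xi \neq \eta$,
\[
\frac{\abs{h(\xi) - h(\eta)}}{\omega(\abs{\xi - \eta})} = \frac{\abs{h(\xi) - h(\eta)}}{\omega_\sharp(\abs{\xi - \eta})} \cdot \frac{\omega_\sharp(\abs{\xi - \eta})}{\omega(\abs{\xi - \eta})} \leq \textnormal{const} \cdot \Vert h \Vert_{\Lambda_{\omega_\sharp}(\mathbb{T})}.
\]
Taking the supremum over all $\xi \neq \eta$ yields $\Vert h \Vert_{\Lambda_{\omega}(\mathbb{T})} \leq \textnormal{const} \cdot \Vert h \Vert_{\Lambda_{\omega_\sharp}(\mathbb{T})} < \infty$, which shows $\Lambda_{\omega_\sharp}(\mathbb{T}) \subseteq \Lambda_{\omega}(\mathbb{T})$ and in particular $h \in \Lambda_{\omega}(\mathbb{T})$.

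With this inclusion established the corollary is immediate: the same triple $U$, $V$, $h$ delivered by the Theorem already satisfies the rank condition and the singular-value lower bound, and we have now verified the membership $h \in \Lambda_{\omega}(\mathbb{T})$ demanded by the statement. There is no genuine obstacle here; the only subtlety worth noting is that the comparison $\omega_\sharp \leq \textnormal{const }\omega$ is needed only on the bounded range $[0, 2]$, which is exactly right because $\mathbb{T}$ has diameter $2$ and so no larger arguments of the moduli of continuity can occur in the seminorm. (One should also recall the always-valid inequality $\omega \leq \omega_\sharp$, so that the hypothesis in fact makes $\omega$ and $\omega_\sharp$ comparable on $[0,2]$ and the two classes coincide with equivalent norms.)
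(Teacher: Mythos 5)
Your proposal is correct and is exactly the argument the paper leaves implicit: the corollary follows from the preceding theorem once one observes that the hypothesis $\omega_\sharp(x)\leq \textnormal{const}\,\omega(x)$ on $[0,2]$ gives the inclusion $\Lambda_{\omega_\sharp}(\mathbb{T})\subseteq\Lambda_{\omega}(\mathbb{T})$, since distances on $\mathbb{T}$ never exceed $2$. Your additional remark that $\omega\leq\omega_\sharp$ always holds, so the two classes in fact coincide on $\mathbb{T}$ under this hypothesis, is a correct and harmless refinement.
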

 
\begin{theorem}
Let $\omega$ be a modulus of continuity on $\mathbb{T}$ and $f$ be a 
continuous function on $\mathbb{T}$. If for all unitary operators
$U$ and $V$, we have
\begin{equation*}
   s_n(f(U)-f(V))\leq \textnormal{const } 
   \omega\big((1+n)^{-\frac{1}{p}}\Vert U-V \Vert_{S_{p}}\big),
 \text{  for all }n \geq 0\, ,
\end{equation*}
then $f \in \Lambda_{\omega}(\mathbb{T}).$
\label{Thm:upperIneUni}
\end{theorem}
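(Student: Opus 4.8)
The goal is to show that $f\in\Lambda_{\omega}(\mathbb{T})$, i.e. that $|f(\zeta_1)-f(\zeta_2)|\leq\textnormal{const}\cdot\omega(|\zeta_1-\zeta_2|)$ for every pair of distinct points $\zeta_1,\zeta_2\in\mathbb{T}$, with a constant independent of the pair. The plan is to feed the hypothesis a family of very simple unitary operators that differ by a rank-one perturbation and that encode the value $|f(\zeta_1)-f(\zeta_2)|$ directly as a singular value, and then to invoke the inequality only at the weakest index $n=0$. The point is that for a rank-one operator $T$ the inequality $s_0(T)=\|T\|$ (under the indexing $s_j(T)=\inf\{\|T-R\|:\textnormal{rank}\,R\leq j\}$) lets us read off the value of $f$ at two points with no loss.

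Concretely, I would fix distinct $\zeta_1,\zeta_2\in\mathbb{T}$ and work on $\ell^2$ (or already on $\mathbb{C}^2$), taking the diagonal unitary operators $U=\textnormal{diag}(\zeta_1,1,1,\dots)$ and $V=\textnormal{diag}(\zeta_2,1,1,\dots)$, which agree in every coordinate but the first. Both are unitary since their diagonal entries are unimodular. Since $f$ is continuous on $\mathbb{T}$, the continuous functional calculus evaluates coordinatewise, giving $f(U)=\textnormal{diag}(f(\zeta_1),f(1),f(1),\dots)$ and similarly for $V$, so that $f(U)-f(V)=\textnormal{diag}(f(\zeta_1)-f(\zeta_2),0,0,\dots)$ is rank one. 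Hence $s_0(f(U)-f(V))=\|f(U)-f(V)\|=|f(\zeta_1)-f(\zeta_2)|$. Likewise $U-V=\textnormal{diag}(\zeta_1-\zeta_2,0,\dots)$ is rank one, so $\|U-V\|_{S_p}=|\zeta_1-\zeta_2|$ for every $p$.

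Applying the hypothesis with $n=0$ then gives
\[
|f(\zeta_1)-f(\zeta_2)|=s_0(f(U)-f(V))\leq\textnormal{const}\cdot\omega\big((1+0)^{-\frac1p}\|U-V\|_{S_p}\big)=\textnormal{const}\cdot\omega(|\zeta_1-\zeta_2|),
\]
with the constant independent of $\zeta_1,\zeta_2$. Taking the supremum over all distinct pairs yields $\|f\|_{\Lambda_{\omega}(\mathbb{T})}\leq\textnormal{const}<\infty$, i.e. $f\in\Lambda_{\omega}(\mathbb{T})$.

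This is really a matter of choosing the test operators correctly, and there is no serious analytic obstacle; the only things to verify are that $U,V$ are genuinely unitary, that the functional calculus acts coordinatewise (which is where continuity of $f$ enters), and the elementary identity $s_0(T)=\|T\|$ for the rank-one difference. It is worth emphasizing that the full strength of the hypothesis is not used: only the case $n=0$ is needed, so the conclusion already follows from the single scalar estimate $s_0(f(U)-f(V))\leq\textnormal{const}\cdot\omega(\|U-V\|_{S_p})$ applied to rank-one-differing diagonal unitaries.
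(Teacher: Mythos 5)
Your proof is correct and takes essentially the same route as the paper: the paper also tests the hypothesis on commuting (diagonal-type) unitaries whose difference encodes $|f(\zeta)-f(\eta)|$ as a singular value, the only difference being that the paper phrases the construction for general $n$ (with $U-V$ having $n+1$ equal singular values $|\zeta-\eta|$, so that $\Vert U-V\Vert_{S_p}=(1+n)^{1/p}|\zeta-\eta|$), whereas you specialize to $n=0$ and a rank-one difference. Both are valid; your version simply observes that the single instance $n=0$ of the hypothesis already suffices.
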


\begin{proof}
  Let $\zeta, \eta \in \mathbb{T}$, we can select commuting unitary operators
  $U$ and $V$ such that $s_0(U-V)=s_1(U-V)=\ldots=s_n(U-V)=\abs{\zeta-\eta}$ 
  and $s_k(U-V)=0,\, k\geq n+1$. Then $ s_n(f(U)-f(V))=\abs{f(\zeta)-f(\eta)}$, 
  $\Vert U-V \Vert_{S_{p}}=(1+n)^{\frac{1}{p}}\cdot \abs{\zeta-\eta}$. 
\end{proof}

\begin{theorem}
 Let $\omega$ be a modulus of continuity on $\mathbb{R}$ and $f$ be a 
continuous function on $\mathbb{R}$. If for all self-adjoint operators
$A$ and $B$, we have
\begin{equation*}
   s_n(f(A)-f(B))\leq \textnormal{const } 
   \omega\big((1+n)^{-\frac{1}{p}}\Vert A-B \Vert_{S_{p}}\big),
 \text{  for all }n \geq 0\, ,
\end{equation*}
then $f \in \Lambda_{\omega}(\mathbb{R}).$
\end{theorem}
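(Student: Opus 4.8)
The plan is to mirror the proof of Theorem~\ref{Thm:upperIneUni}, exploiting that on the real line we have even more freedom than in the unitary case: self-adjoint operators carry no unitarity constraint, so a pair of commuting \emph{diagonal} operators suffices. Fix arbitrary points $x,y\in\mathbb{R}$ and fix $n\geq 0$. First I would construct, on a Hilbert space containing an $(n+1)$-dimensional subspace, the commuting self-adjoint operators that act as $A=x\cdot I$ and $B=y\cdot I$ on that subspace (extending them to agree on the orthogonal complement, so that $A-B$ is supported there). Then $A-B$ has exactly $n+1$ nonzero singular values, all equal: $s_0(A-B)=\cdots=s_n(A-B)=\abs{x-y}$ and $s_k(A-B)=0$ for $k\geq n+1$. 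Since $A$ and $B$ commute, the functional calculus is applied simultaneously, giving $f(A)-f(B)=(f(x)-f(y))I$ on the subspace, whence $s_j(f(A)-f(B))=\abs{f(x)-f(y)}$ for $0\leq j\leq n$; in particular $s_n(f(A)-f(B))=\abs{f(x)-f(y)}$.

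Next I would compute the relevant Schatten norm. Because $A-B$ has exactly $n+1$ nonzero singular values, all equal to $\abs{x-y}$, we obtain $\Vert A-B\Vert_{S_p}=(1+n)^{1/p}\abs{x-y}$. Substituting this into the hypothesis and using the monotonicity of $\omega$ yields
\[
  \abs{f(x)-f(y)}=s_n(f(A)-f(B))\leq \textnormal{const}\,\omega\big((1+n)^{-1/p}(1+n)^{1/p}\abs{x-y}\big)=\textnormal{const}\,\omega(\abs{x-y}).
\]
Since $x,y\in\mathbb{R}$ were arbitrary and the constant is independent of them (and of $n$), taking the supremum over $x\neq y$ gives $\Vert f\Vert_{\Lambda_\omega(\mathbb{R})}\leq\textnormal{const}<\infty$, that is, $f\in\Lambda_\omega(\mathbb{R})$.

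There is essentially no hard step here; the only points requiring care are verifying that the chosen operators realize the prescribed singular-value profile \emph{exactly} — which is immediate since all nonzero singular values coincide, so no reordering issue arises in passing from $A-B$ to $f(A)-f(B)$ — and noting that mere continuity of $f$ is all that is needed, as the functional calculus only evaluates $f$ at the two eigenvalues $x$ and $y$. If anything, the construction is simpler than in the unitary case, since we may take $A$ and $B$ to be scalar multiples of the identity on a subspace of the exact dimension $n+1$, rather than engineering a prescribed rank structure compatible with unitarity.
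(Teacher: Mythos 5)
Your proposal is correct and follows essentially the same route as the paper: the paper's proof is simply ``similar to Theorem~\ref{Thm:upperIneUni},'' i.e., pick commuting self-adjoint operators whose difference has $n+1$ equal nonzero singular values $\abs{x-y}$, so that $s_n(f(A)-f(B))=\abs{f(x)-f(y)}$ and $\Vert A-B\Vert_{S_p}=(1+n)^{1/p}\abs{x-y}$, exactly as you do. Your explicit construction ($A=xI$, $B=yI$ on an $(n+1)$-dimensional subspace, agreeing on the complement) is just a concrete realization of that same argument.
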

\begin{proof}
Similar to Theorem \ref{Thm:upperIneUni}.
\end{proof}

\begin{theorem}
 Let $\omega$ be a modulus of continuity over $\mathbb{R}$. There exist 
 self-adjoint operators $A$, $B$, and a real function $f$ 
 in $\Lambda_{\omega_{\sharp}}(\mathbb{R})$ such that 
 \[
 \textnormal{rank}(A-B)=1 \;\, and\;\,  
 s_m(f(A)-f(B))\geq\omega\big((1+m)^{-1}\big)\text{, for all } m \geq 0.
 \]   
\end{theorem}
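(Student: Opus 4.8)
The plan is to construct an explicit example on the real line that mirrors the circle construction of the preceding theorem, transporting each ingredient---the rank-one pair of operators, the lacunary test function, and the staircase singular-value estimate---to the self-adjoint setting. The first thing I would flag is that the obvious shortcut fails: one cannot simply push the circle operators $U,V$ through a Cayley transform $\phi(\zeta)=i(1+\zeta)/(1-\zeta)$, because $U$ (multiplication by $\bar z$) has spectrum all of $\mathbb{T}$, so $\phi(U),\phi(V)$ are unbounded and only their \emph{resolvents} differ by a rank-one operator, whereas the theorem demands $\operatorname{rank}(A-B)=1$. Accordingly I would build $A,B$ directly: take a bounded self-adjoint operator $A$ with purely absolutely continuous, simple spectrum (e.g.\ the free Jacobi operator $e_k\mapsto e_{k+1}+e_{k-1}$ on $\ell^2(\mathbb{Z})$, or multiplication by the variable on $L^2$ of a suitable measure) and let $B=A+\gamma(\,\cdot\,,\xi)\xi$ be a genuine rank-one self-adjoint perturbation. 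In the spectral representation of $A$ the operator $f(A)-f(B)$ is a double operator integral of the divided difference $(f(s)-f(t))/(s-t)$ applied to a rank-one operator, which collapses, up to a scalar, to a Hankel-type operator $\Gamma_f$; this is the analogue of the identity for the matrix entries of $f(U)-f(V)$ used on the circle.

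Next I would choose the test function. Imitating $g(\zeta)=\sum_n\omega(4^{-n})(\zeta^{4^n}+\bar\zeta^{4^n})$, I would take $f$ on $\mathbb{R}$ whose spectral data relative to $A$ are concentrated at the dyadic scales $4^n$ with weights $\omega(4^{-n})$. That $f\in\Lambda_{\omega_\sharp}(\mathbb{R})$ should follow by the same telescoping estimate as in the circle proof: splitting $f=\sum_{n\le N}f_n+(f-f*V_N)$ with $2^{-N}\asymp\abs{x-y}$, bounding $\sum_{n\le N}\abs{f_n(x)-f_n(y)}\le\textnormal{const}\sum_{n\le N}2^n\omega(2^{-n})\abs{x-y}\le\textnormal{const}\,\omega_\ast(\abs{x-y})$ and the tail by $\textnormal{const}\sum_{n>N}\omega(2^{-n})\le\textnormal{const}\int_0^{2^{-N}}\frac{\omega(t)}{t}\,dt$, then adding to get $\omega_\sharp(\abs{x-y})$; here I would invoke (\ref{Ine:doubleUninf}) exactly as before.

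With $f$ and the operators fixed, the lower bound is the staircase argument almost verbatim: inside $\Gamma_f$ I would isolate, for each $n$, an anti-diagonal block $T_n$ of size comparable to $4^n$ all of whose cross-diagonal entries equal a fixed multiple of $\omega(4^{-n})$, realized as a compression $P_n\Gamma_f P_n$. Since such a block cannot be approximated to within $\omega(4^{-n})$ by an operator of rank smaller than its size, one gets $s_j(\Gamma_f)\ge\textnormal{const}\,\omega(4^{-n})$ for all $j$ up to $\textnormal{const}\cdot4^{n}$, hence $s_m(f(A)-f(B))\ge\textnormal{const}\,s_m(\Gamma_f)\ge\textnormal{const}\,\omega\big((1+m)^{-1}\big)$ for every $m\ge0$; replacing $f$ by a fixed scalar multiple then removes the constant and yields the stated inequality.

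The main obstacle is the one step with no analogue-by-fiat: proving that $f(A)-f(B)$ genuinely carries these anti-diagonal staircase blocks. On the circle this was immediate because $f(U)-f(V)$ was an honest Hankel matrix whose entries were single Fourier coefficients $\hat g(j+k)$; in the self-adjoint, continuous-spectrum setting I must first compute the integral kernel of $f(A)-f(B)$ in the spectral representation of $A$, identify it with a Hankel operator $\Gamma_f$ on a half-line, and then discretize that continuous kernel so that the bumps of the spectral data of $f$ at the scales $4^n$ produce clean anti-diagonal submatrices $T_n$. Extracting the correct sizes (comparable to $4^n$) and the uniform cross-diagonal lower bound $\omega(4^{-n})$ from a continuous rather than lacunary-discrete spectrum is where the real work lies, and doing so while keeping $\operatorname{rank}(A-B)=1$ throughout is the delicate part.
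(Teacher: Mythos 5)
Your plan correctly locates the difficulty, but it leaves that very step unresolved, and this is a genuine gap, not a detail: with your choice of operators (a Jacobi-type $A$ and a generic rank-one perturbation $B=A+\gamma(\cdot,\xi)\xi$), there is no reason the double operator integral representing $f(A)-f(B)$ should collapse to a Hankel operator whose matrix contains exact anti-diagonal blocks with constant entries $\omega(4^{-n})$. The staircase lower bound requires many matrix entries to be \emph{equal} to $\omega(4^{-n})$ (so that any low-rank approximant misses an entire anti-diagonal), not merely that the kernel be ``concentrated'' at lacunary scales; discretizing a continuous kernel destroys precisely this exactness, and you acknowledge in your last paragraph that you do not know how to carry it out. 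As written, the proposal is a program with a hole at its center.

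The paper closes this hole by abandoning the generic perturbation and instead choosing $A$, $B$ so that the functional calculus is completely explicit. Take $\varphi(x)=(x^2+1)^{-1}$ and, on the subspace $L^2_e(\mathbb{R})$ of even functions, set $B=M_\varphi$ (multiplication by $\varphi$) and $A=\mathbf{H}^{-1}M_\varphi\mathbf{H}$, where $\mathbf{H}$ is the Hilbert transform. Then $f(B)=M_{f\circ\varphi}$ and $f(A)=\mathbf{H}^{-1}M_{f\circ\varphi}\mathbf{H}$, so $f(A)-f(B)=\mathbf{H}^{-1}\bigl[M_{f\circ\varphi},\mathbf{H}\bigr]$ is, up to a unitary factor and the constant $\sqrt{2}$, exactly the Hankel operator $\mathcal{H}_{f\circ\varphi}$ --- no double operator integrals and no discretization are needed, and $\textnormal{rank}(A-B)=1$ because the commutator $[M_\varphi,\mathbf{H}]$ restricted to even functions has rank one (this is the point of the specific rational $\varphi$ and of passing to $L^2_e$). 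The test function is then pulled back from the circle: with $g_1(\zeta)=\sum_{n\geq 1}\omega(4^{-n})(\zeta^{4^n}+\bar\zeta^{4^n})$ as in the unitary case, one sets $g_0=C\rho g_1$ where $\rho$ is a smooth cutoff satisfying $\rho(\zeta)=\rho(\bar\zeta)$ and vanishing near $\{-1,1\}$; this symmetry and vanishing are exactly what is needed for a compactly supported real function $f$ with $f(\varphi(x))=g_0\bigl((x-i)/(x+i)\bigr)$ to exist and lie in $\Lambda_{\omega_\sharp}(\mathbb{R})$, while for $C$ large the cutoff does not spoil the Hankel lower bound, giving $s_m(f(A)-f(B))\geq\sqrt{2}\,s_m(H_{g_0})\geq\omega\bigl((1+m)^{-1}\bigr)$. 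Note in particular that the Cayley-type map is applied to \emph{symbols}, not to operators, which circumvents exactly the unboundedness objection you raised at the outset; your instinct to discard that route was right, but the correct repair is to engineer the rank-one pair so that $f(A)-f(B)$ is an honest Hankel operator from the start, rather than to fight a generic perturbation into Hankel form.
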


\begin{proof}
 WLOG, we assume $\omega(t)=\omega(2)$, for all $t\geq 2$, that is, $\omega$ 
 can be regarded as a modulus of continuity on $\mathbb{T}.$ \\
 We then choose a function(see~\cite{B}, Lemma 9.6) $\rho \in C^{\infty}(\mathbb{T})$ such that 
 $\rho(\zeta)+\rho(i\zeta)=1$, $\rho(\zeta)=\rho(\bar{\zeta})$ for all $\zeta \in 
 \mathbb{T}$, and $\rho$ vanishes in a neighborhood of the set $\{-1, 1\}.$ Note 
 that $\rho \in \Lambda_{\omega}(\mathbb{T}),$ since $\omega(st)
 \geq \frac{s}{2}\omega(t),$ for all $t\geq 0$ and $s$, $0<s<1.$\\
 Define function $g_1$ by 
 \[
 g_1(\zeta)=\sum_{n=1}^{\infty}\omega(4^{-n})(\zeta^{4^{n}}+\bar{\zeta}^{4^{n}}),
 \qquad \zeta \in \mathbb{T}. 
 \]  
 Then $g_1 \in \Lambda_{\omega_{\sharp}}(\mathbb{T}).$ If $g_0\myeq C\rho 
 g_1$ for a sufficient large number $C$, then $g_0 \in \Lambda_{\omega_{\sharp}}(\mathbb{T}),$ 
 vanishes in a neighborhood of the set $\{-1,1\}$ and $g_0(\zeta)=g_0(\bar{\zeta})$ for all $\zeta \in 
 \mathbb{T}$, and $s_m(H_{g_{0}})\geq \omega\big((1+m)^{-1}\big)$ for all $m\geq 0.$\\
 Define $\varphi(x)=(x^2+1)^{-1}$(as in ~\cite{B}, Theorem 9.9), then there exists
 a compactly supported real bounded function $f$ such that
  $f(\varphi(x))=g_0(\frac{x-i}{x+i})$ and a simple calculation shows that $f$ belongs
  to $\Lambda_{\omega_{\sharp}}(\mathbb{R}).$ Denote $L_{e}^{2}(\mathbb{R})$ the subspace of even functions in $L^{2}(\mathbb{R})$. Consider operators $A$ and $B$ on
  $L_{e}^{2}(\mathbb{R})$ defined by $A(g)=\mathbf{H}^{-1}M_{\varphi}\mathbf{H}(g)$ and 
  $B(g)=\varphi g,$ here $\mathbf{H}$ is the Hilbert transform defined on $L^2(\mathbb{R})$ ( see \cite{B}) and $M_{\varphi}$ 
  is the multiplication by $\varphi$. Then
  $\text{rank}(A-B)=1,$ and we have
  \begin{equation*}
  s_m(f(B)-f(A))\geq \sqrt{2}s_m(\mathcal{H}_{f\circ\varphi})=\sqrt{2}s_m(H_{g_0})
  \geq \sqrt{2}\omega\big((1+m)^{-1}\big). \qedhere
  \end{equation*}
  \end{proof}
\section{Estimates for other types of operators}
  The following estimates are given without proofs in case of contractions, maximal dissipative operators, normal operators and $n$-tuples of commuting self-adjoint operators.
\label{sec:num3}
\begin{theorem}
There exists a constant $c>0$ such that for every modulus of continuity $\omega$, every $f$ in
$\big(\Lambda_{\omega}(\mathbb{R})\big)_{+}$ and for arbitrary contractions $T$ and $R$ on
Hilbert space,
the following inequality holds for all $l$ and for all $j$, $1\leq j\leq l :$
\begin{equation*}
   s_j(f(T)-f(R))\leq c\;
   \omega_{\ast}\big((1+j)^{-\frac{1}{p}}\Vert T-R \Vert_{S_{p}^l}\big)
    \Vert f \Vert_{\Lambda_{\omega}}.
   \label{Ine:svalue_cntrctn}
\end{equation*}
\label{Thm:mainthm_cntrctn}
\end{theorem}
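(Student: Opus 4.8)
The plan is to transcribe the proof of Theorem~\ref{Thm:mainthm} almost line by line, replacing the self-adjoint pair $A,B$ by the pair of contractions $T,R$ and replacing the two Bernstein-type bounds (\ref{Ine:opnorm}) and (\ref{Ine:plnorm}) by their analogues for contractions. The only genuinely new analytic input I would need is the following statement: there is a constant $c$ so that for every $g\in\big(\Lambda_\omega(\mathbb{R})\big)_+$ that is entire of exponential type at most $\sigma$ and bounded on $\mathbb{R}$, and for all contractions $T,R$, one has $\Vert g(T)-g(R)\Vert\le c\,\sigma\,\Vert g\Vert_{L^\infty}\Vert T-R\Vert$ and
\[
\Vert g(T)-g(R)\Vert_{S_p^l}\le c\,\sigma\,\Vert g\Vert_{L^\infty}\Vert T-R\Vert_{S_p^l}.
\]
This is exactly where the hypothesis $f\in\big(\Lambda_\omega\big)_+$ enters: a contraction has spectrum in $\operatorname{clos}\mathbb{D}$, so only symbols analytic in the disk---equivalently, after a Cayley transform, functions analytic in $\mathbb{C}_+$, i.e.\ plus-class functions---may be inserted into $T$ through the Sz.-Nagy--Foias ($H^\infty$) functional calculus. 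The displayed inequality should be obtainable by the methods of \cite{A} and \cite{C}, passing through the unitary case via dilation and controlling the resulting double operator integral uniformly in $\sigma$.

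Granting this, I would first reduce to $f$ bounded exactly as in Theorem~\ref{Thm:mainthm} ($T,R$ are already bounded, and $f$ may be altered away from $\operatorname{clos}\mathbb{D}$ without affecting $f(T)$ or $f(R)$). Because $\operatorname{supp}\mathcal{F}f\subset[0,\infty)$, the sharp pieces vanish, so $f_n=f*W_n$, and each $f_n$ is again a plus-class entire function of exponential type at most $2^{n+1}$; likewise $f*V_N$ is plus-class of exponential type at most $2^{N+1}$ and $f-f*V_N$ is plus-class and bounded. I would then set $R_N=\sum_{n=-\infty}^{N}(f_n(T)-f_n(R))$ and $Q_N=(f-f*V_N)(T)-(f-f*V_N)(R)$, so that $f(T)-f(R)=R_N+Q_N$ with convergence in the operator norm, just as in \cite{A}.

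The two terms are estimated verbatim as in Theorem~\ref{Thm:mainthm}. For $Q_N$ I would invoke von Neumann's inequality $\Vert h(T)\Vert\le\Vert h\Vert_{L^\infty}$ for bounded plus-class $h$ and a contraction $T$, together with (\ref{Ine:valleeinf}), to get $\Vert Q_N\Vert\le c\,\omega(2^{-N})\Vert f\Vert_{\Lambda_\omega}$. For $R_N$ I would sum the contraction Bernstein inequality over $n\le N$, using (\ref{Ine:doubleUninf}) and the same telescoping estimate $\sum_{n\le N}2^n\omega(2^{-n})\le c\,2^N\omega_\ast(2^{-N})$ from \cite{A}, to obtain $\Vert R_N\Vert_{S_p^l}\le c\,2^{N}\omega_\ast(2^{-N})\Vert T-R\Vert_{S_p^l}\Vert f\Vert_{\Lambda_\omega}$. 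Finally, from $s_j(f(T)-f(R))\le s_j(R_N)+\Vert Q_N\Vert\le(1+j)^{-1/p}\Vert R_N\Vert_{S_p^l}+\Vert Q_N\Vert$, the choice of $N$ with $1\le(1+j)^{-1/p}2^N\Vert T-R\Vert_{S_p^l}\le 2$, and the fact that $\omega(t)\le\omega_\ast(t)$, the desired bound follows exactly as at the end of Theorem~\ref{Thm:mainthm}.

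I expect the entire weight of the argument to sit in the contraction Bernstein inequality. In the self-adjoint case (\ref{Ine:plnorm}) is available for \emph{every} bounded entire function of exponential type, but for a contraction the functional calculus exists only for analytic symbols, and establishing the $S_p^l$ bound with a constant linear in $\sigma$ and independent of the operators requires the dilation and double-operator-integral machinery rather than a direct computation. Once that inequality is in place, the remainder is a routine repetition of Theorem~\ref{Thm:mainthm}, and the same scheme---with the appropriate functional calculus substituted---covers the maximal dissipative, normal, and commuting-tuple cases announced in this section.
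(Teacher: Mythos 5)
Your overall scheme (Littlewood--Paley decomposition, telescoped Bernstein bounds, optimal choice of $N$) is the right skeleton, but the single analytic lemma you rest it on is false, and it fails exactly where contractions differ from self-adjoint operators. You propose that for every plus-class entire function $g$ of exponential type at most $\sigma$, bounded on $\mathbb{R}$, and all contractions $T,R$, one has $\Vert g(T)-g(R)\Vert\le c\,\sigma\Vert g\Vert_{L^\infty(\mathbb{R})}\Vert T-R\Vert$. Take $g(x)=e^{i\sigma x}$, $T=-iI$, $R=-i(1-\varepsilon)I$ (both contractions; on a one-dimensional space, so the $S_p^l$ version fails as well). Then $\Vert g\Vert_{L^\infty(\mathbb{R})}=1$ and $\Vert T-R\Vert=\varepsilon$, while
\[
\Vert g(T)-g(R)\Vert=e^{\sigma}-e^{\sigma(1-\varepsilon)}=e^{\sigma}\bigl(1-e^{-\sigma\varepsilon}\bigr),
\]
which is of order $\sigma\varepsilon\,e^{\sigma}$ for small $\varepsilon$, so no constant $c$ independent of $\sigma$ exists. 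The reason: boundedness on $\mathbb{R}$ plus analyticity in $\mathbb{C}_+$ controls $g$ only on $\text{clos}\,\mathbb{C}_+$, whereas $\lvert g\rvert$ may grow like $e^{\sigma\lvert\text{Im}\,z\rvert}$ in the lower half-plane, and the spectrum of a contraction fills the closed disk, which meets the lower half-plane. Your Cayley-transform remark does not repair this: it identifies $\mathbb{D}$ with $\mathbb{C}_+$ as domains, but you still insert the line-decomposed pieces $f*W_n$ directly into $T$ and $R$; converting the operators instead (contraction into dissipative) is the content of the separate Theorem \ref{Thm:mainthm_dissi} and in any case does not preserve $\Vert T-R\Vert_{S_p^l}$. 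The same confusion undermines your $Q_N$ estimate: von Neumann's inequality bounds $\Vert h(T)\Vert$ by the sup-norm of $h$ on the closed disk, not by $\Vert h\Vert_{L^\infty(\mathbb{R})}$.

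The route the paper indicates is different in substance: it stays on the circle, not the line. The hypothesis should be read so that $f$ lies in the analytic (disk) subclass of $\Lambda_\omega$ --- the natural class for the contraction functional calculus --- and one decomposes $f=\sum_{n\ge 0}f*W_n$ with the circle kernels of Section \ref{sec:num1}, so that each $f*W_n$ is an analytic polynomial of degree at most $2^{n+1}$. The key inequality is then the one the paper states right after the theorem, from \cite{L} and \cite{B}: $\Vert g(T)-g(R)\Vert_{S_p}\le c\,n\Vert g\Vert_{L^\infty}\Vert T-R\Vert_{S_p}$ for (analytic) polynomials $g$ of degree $n$ and arbitrary contractions. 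The remainder $(f-f*V_N)(T)-(f-f*V_N)(R)$ is controlled by von Neumann's inequality together with (\ref{Ine:valleeinf}), which is legitimate here because $f-f*V_N$ is analytic in $\mathbb{D}$ and its sup-norm on $\text{clos}\,\mathbb{D}$ equals its sup-norm on $\mathbb{T}$ by the maximum principle. From there the proof must be modeled on the unitary theorem of Section \ref{sec:num2}, not on Theorem \ref{Thm:mainthm}: the sum over $n$ is one-sided, so you cannot always choose $N$ with $1\le(1+j)^{-1/p}2^N\Vert T-R\Vert_{S_p^l}\le 2$, and the case $(1+j)^{-1/p}\Vert T-R\Vert_{S_p^l}>2$ must be handled separately using $\Vert T-R\Vert\le 2$, exactly as in the unitary proof.
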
 
To prove this result, the following result is important(see \cite{A}, \cite{B} and \cite{L}):

There exists a constant $c$ such that for arbitrary trigonometric polynomial $f$ of degree $n$ and for
arbitrary contractions $T$ and $R$ on Hilbert space,
 
\[
   \Vert(f(T)-f(R)\Vert_{S_{p}}\leq c\;
   n\Vert f \Vert_{L^{\infty}}\Vert T-R \Vert_{S_{p}}.
\]

Denote $\mathcal{F}$ the \textit{Fourier transform} on $L_1(\mathbb{R}^{n})$, $n \geq 1$ by:
\begin{equation*}
(\mathcal{F}f)(t)=\int_{\mathbb{R^{n}}}f(x)e^{-i(x,t)}dx, \text{where}
\end{equation*} 
\begin{equation*}
 x=(x_1, ..., x_n), \; t=(t_1, ..., t_n), \;(x,t)\myeq x_1t_1+...+x_nt_n.
\end{equation*} 
\begin{theorem}
 There exists a constant $c>0$ such that for every modulus of continuity $\omega$, every $f$ in
$\big(\Lambda_{\omega}(\mathbb{R})\big)_{+}$ and for arbitrary maximal dissipative operators $L$ and $M$
 with bounded difference,
the following inequality holds for all $l$ and for all $j$, $1\leq j\leq l :$
\begin{equation*}
   s_j(f(L)-f(M))\leq c\;
   \omega_{\ast}\big((1+j)^{-\frac{1}{p}}\Vert L-M \Vert_{S_{p}^l}\big)
    \Vert f \Vert_{\Lambda_{\omega}}.
   \label{Ine:svalue_dissi}
\end{equation*}
\label{Thm:mainthm_dissi}
\end{theorem}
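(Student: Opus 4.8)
The plan is to transcribe the proof of Theorem~\ref{Thm:mainthm} almost verbatim, the only genuinely new ingredient being the dissipative analogue of the Schatten estimate~(\ref{Ine:plnorm}). First I would fix the functional calculus: with the Cayley transform $\kappa(z)=(z-i)(z+i)^{-1}$ mapping $\operatorname{clos}\mathbb{C}_{+}$ onto $\operatorname{clos}\mathbb{D}$, a maximal dissipative operator $L$ (numerical range in $\operatorname{clos}\mathbb{C}_{+}$) produces a contraction $T=\kappa(L)=(L-iI)(L+iI)^{-1}$, and for $g$ bounded and analytic in $\mathbb{C}_{+}$ one sets $g(L)=u(T)$ with $u=g\circ\kappa^{-1}\in H^{\infty}(\mathbb{D})$; the von Neumann inequality then gives $\|g(L)\|\le\|u\|_{H^{\infty}(\mathbb{D})}=\|g\|_{L^{\infty}(\mathbb{R})}$. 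As in~\cite{C}, Lemma~4.4, $L$ and $M$ may be taken bounded and $f$ bounded. Since $f\in(\Lambda_{\omega}(\mathbb{R}))_{+}$ has $\operatorname{supp}\mathcal{F}f\subset[0,\infty)$, the summands $f*W_{n}^{\sharp}$ vanish and $f_{n}=f*W_{n}$; each $f_{n}$ is bounded, analytic in $\mathbb{C}_{+}$, of exponential type at most $2^{n+1}$, and obeys $\|f_{n}\|_{L^{\infty}}\le c\,\omega(2^{-n})\|f\|_{\Lambda_{\omega}}$ by~(\ref{Ine:doubleUninf}).

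The core is the dissipative version of~(\ref{Ine:plnorm}): for $h$ entire of exponential type at most $\sigma$, bounded on $\mathbb{R}$, with $\operatorname{supp}\mathcal{F}h\subset[0,\infty)$, and for maximal dissipative $L,M$ with bounded difference, $\|h(L)-h(M)\|_{S_{p}^{l}}\le c\,\sigma\|h\|_{L^{\infty}}\|L-M\|_{S_{p}^{l}}$. I would derive this from the contraction estimate stated after Theorem~\ref{Thm:mainthm_cntrctn}. Writing $T=\kappa(L)$, $T'=\kappa(M)$, the resolvent identity gives $\kappa(L)-\kappa(M)=2i\,(L+iI)^{-1}(L-M)(M+iI)^{-1}$; the resolvents $(L+iI)^{-1}$, $(M+iI)^{-1}$ are contractions by maximal dissipativity, so the ideal property of $S_{p}^{l}$ yields $\|T-T'\|_{S_{p}^{l}}\le 2\|L-M\|_{S_{p}^{l}}$. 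Then $h(L)-h(M)=u(T)-u(T')$ with $u=h\circ\kappa^{-1}$ and $\|u\|_{L^{\infty}(\mathbb{T})}=\|h\|_{L^{\infty}(\mathbb{R})}$, and the contraction inequality supplies the Schatten bound once the exponential type $\sigma$ is matched to the circle ``degree.''

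With $R_{N}=\sum_{n=-\infty}^{N}(f_{n}(L)-f_{n}(M))$ and $Q_{N}=(f-f*V_{N})(L)-(f-f*V_{N})(M)$, the norm bound $\|g(L)\|\le\|g\|_{L^{\infty}}$ together with~(\ref{Ine:valleeinf}) gives $\|Q_{N}\|\le 2\|f-f*V_{N}\|_{L^{\infty}}\le c\,\omega(2^{-N})\|f\|_{\Lambda_{\omega}}$, while the building block, the estimate on $\|f_{n}\|_{L^{\infty}}$, and the summation $\sum_{n\le N}2^{n}\omega(2^{-n})\le c\,2^{N}\omega_{\ast}(2^{-N})$ give $\|R_{N}\|_{S_{p}^{l}}\le c\,2^{N}\omega_{\ast}(2^{-N})\|L-M\|_{S_{p}^{l}}\|f\|_{\Lambda_{\omega}}$. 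Exactly as in Theorem~\ref{Thm:mainthm} one has $s_{j}(f(L)-f(M))\le s_{j}(R_{N})+\|Q_{N}\|\le(1+j)^{-1/p}\|R_{N}\|_{S_{p}^{l}}+\|Q_{N}\|$, and choosing $N$ with $1\le(1+j)^{-1/p}2^{N}\|L-M\|_{S_{p}^{l}}\le 2$ and using $\omega\le\omega_{\ast}$ yields the claim.

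The main obstacle is the sharp building block with the constant linear in $\sigma$. The crude route through the semigroup, $\|e^{itL}-e^{itM}\|\le t\|L-M\|$ via Duhamel, only produces a weight $\int_{0}^{\sigma}t\,\abs{\mathcal{F}h(t)}\,dt$, which is not controlled by $\sigma\|h\|_{L^{\infty}}$; obtaining the clean linear dependence requires either the Schur-multiplier (double operator integral) analysis underlying~(\ref{Ine:plnorm}) transported to the dissipative calculus, or a careful passage through $\kappa$ in which exponential type is converted to polynomial degree. Reconciling these two notions of ``degree'' is the delicate point, and it is exactly what the cited works~\cite{A},~\cite{B},~\cite{L} provide; granted it, the rest is a routine transcription of the self-adjoint argument.
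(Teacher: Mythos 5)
Your outer argument --- reduction to bounded operators, dropping the $f*W_n^{\sharp}$ terms because $\text{supp }\mathcal{F}f \subset [0,\infty)$, the splitting $f(L)-f(M)=R_N+Q_N$, and the optimization over $N$ --- is exactly the intended transcription of Theorem \ref{Thm:mainthm}, and that part is fine. The genuine gap is in your derivation of the building block. You propose to obtain
\[
\Vert h(L)-h(M)\Vert_{S_p^l}\leq c\,\sigma\Vert h\Vert_{L^{\infty}}\Vert L-M\Vert_{S_p^l}
\]
from the contraction inequality via the Cayley transform $\kappa$. The operator-theoretic half of your reduction is correct: $(L+iI)^{-1}$ and $(M+iI)^{-1}$ are contractions, so indeed $\Vert\kappa(L)-\kappa(M)\Vert_{S_p^l}\leq 2\Vert L-M\Vert_{S_p^l}$. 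But the function-theoretic half cannot be repaired: the contraction estimate requires $u=h\circ\kappa^{-1}$ to be an analytic polynomial of controlled degree, and the inverse Cayley transform destroys band-limitedness entirely. For example $h(z)=e^{i\sigma z}$, of exponential type $\sigma$ with $\text{supp }\mathcal{F}h=\{\sigma\}$, transforms into $u(w)=\exp\bigl(-\sigma\tfrac{1+w}{1-w}\bigr)$, a singular inner function --- not a polynomial of any degree, and not approximable in $H^{\infty}$ norm by polynomials of degree $O(\sigma)$. So ``converting exponential type to polynomial degree,'' which you yourself flag as the delicate point, is not merely delicate: it is impossible along this route. Moreover, the works \cite{A}, \cite{B}, \cite{L} you invoke for it do not supply it; they concern self-adjoint operators, unitaries and contractions, not dissipative operators.

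What closes the gap is the citation the paper actually attaches to this theorem: \cite{D} (Aleksandrov--Peller, functions of perturbed dissipative operators) proves precisely the inequality $\Vert h(L)-h(M)\Vert_{S_p}\leq c\,\sigma\Vert h\Vert_{L^{\infty}}\Vert L-M\Vert_{S_p}$ for $h\in H^{\infty}(\mathbb{C}_{+})$ with $\text{supp }\mathcal{F}h\subset[0,\sigma]$ and maximal dissipative $L$, $M$ with bounded difference --- not by reduction to contractions, but by the double-operator-integral analysis (with respect to the semispectral measures of $L$ and $M$) that is the first of the two alternatives you mention. The same caveat applies to your appeal to \cite{C}, Lemma 4.4, which is stated for self-adjoint operators; the corresponding reductions in the dissipative setting are also carried out in \cite{D}. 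Once that result is quoted (and extended to the symmetric norms $S_p^l$, which its proof permits), your remaining argument goes through verbatim and coincides with the paper's.
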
 

To prove this result, the following result is important(see \cite{D}):

There exists a constant $c>0$ such that for every  function
$f$ in $H^{\infty}(\mathbb{C}_{+})$ with $\text{supp }\mathcal{F}f \subset [0, \sigma]$, $\sigma > 0, $ and for arbitrary maximal dissipative operators $L$ and $M$
 with bounded difference,
 
\[
   \Vert f(L)-f(M)\Vert_{S_p}\leq c\, \sigma\Vert f \Vert_{L^{\infty}}\Vert L-M \Vert_{S_p}.
\]
 
\begin{theorem}
There exists a constant $c>0$ such that for every modulus of continuity $\omega$, every $f$ in
$\Lambda_{\omega}(\mathbb{R}^2)$ and for arbitrary normal operators $N_1$ and $N_2$,
the following inequality holds for all $l$ and for all $j$, $1\leq j\leq l :$
\begin{equation*}
   s_j(f(N_1)-f(N_2))\leq c\;
   \omega_{\ast}\big((1+j)^{-\frac{1}{p}}\Vert N_1-N_2 \Vert_{S_{p}^l}\big)
    \Vert f \Vert_{\Lambda_{\omega}}.
   \label{Ine:svalue_nor}
\end{equation*}
\label{Thm:mainthm_nor}
\end{theorem}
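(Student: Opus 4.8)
The plan is to run the proof of Theorem~\ref{Thm:mainthm} essentially unchanged, replacing the one--dimensional Littlewood--Paley apparatus of \S\ref{sec:num1} by its two--dimensional counterpart and the self--adjoint band--limited estimate \eqref{Ine:plnorm} by the analogous estimate for functions of normal operators. A normal operator is identified with the pair formed by its real and imaginary parts, so that a function $f\in\Lambda_{\omega}(\mathbb{R}^2)$ is applied through the joint spectral measure. As in Theorem~\ref{Thm:mainthm}, I would first reduce to the case in which $N_1,N_2$ are bounded and $f$ is bounded.

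First I would set up the dyadic decomposition on $\mathbb{R}^2$, building radial cut--offs from $w$ and $v$ as in \S\ref{sec:num1} but relative to the two--dimensional Fourier transform introduced above, and write $f=\sum_n f_n$, where $\mathcal{F}f_n$ is supported in the annulus $\{2^{n-1}\le\abs{x}\le 2^{n+1}\}$ and $\mathcal{F}(f-f*V_N)$ is supported outside the ball of radius $2^N$. Then each $f_n$, and likewise $f-f*V_N$, is an entire function of exponential type at most $2^{n+1}$. The two--dimensional analogues of \eqref{Ine:valleeinf} and \eqref{Ine:doubleUninf}, namely $\Vert f-f*V_N\Vert_{L^{\infty}}\le c\,\omega(2^{-N})\Vert f\Vert_{\Lambda_{\omega}}$ and $\Vert f_n\Vert_{L^{\infty}}\le c\,\omega(2^{-n})\Vert f\Vert_{\Lambda_{\omega}}$, hold for $\Lambda_{\omega}(\mathbb{R}^2)$ by the same smoothing arguments as in the one--variable case (see \cite{A}).

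The analytic heart of the argument is the band--limited Lipschitz estimate for normal operators: there is a constant $c$ such that for every entire function $g$ of exponential type at most $\sigma$ that is bounded on $\mathbb{R}^2$ and all normal operators $N_1,N_2$,
\[
\Vert g(N_1)-g(N_2)\Vert_{S_p^l}\le c\,\sigma\,\Vert g\Vert_{L^{\infty}}\,\Vert N_1-N_2\Vert_{S_p^l},
\]
together with its operator--norm form. This plays the role of \eqref{Ine:opnorm}--\eqref{Ine:plnorm}; it follows from the triple operator integral representation of $g(N_1)-g(N_2)$ against the joint spectral measures of $N_1$ and $N_2$, combined with the bound on the Schur--multiplier norm of the divided difference of a band--limited symbol. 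I expect this to be the \textbf{main obstacle}, since the commuting two--variable functional calculus is genuinely more delicate than the self--adjoint one, and the Schur--multiplier estimate must be uniform in $p$ and $l$.

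Granting this input, I set $R_N=\sum_{n\le N}(f_n(N_1)-f_n(N_2))$ and $Q_N=(f-f*V_N)(N_1)-(f-f*V_N)(N_2)$, so that $f(N_1)-f(N_2)=R_N+Q_N$. Summing the band--limited estimate yields $\Vert R_N\Vert_{S_p^l}\le c\,2^N\omega_{\ast}(2^{-N})\Vert N_1-N_2\Vert_{S_p^l}\Vert f\Vert_{\Lambda_{\omega}}$ exactly as in Theorem~\ref{Thm:mainthm}, while the operator--norm form gives $\Vert Q_N\Vert\le c\,\omega(2^{-N})\Vert f\Vert_{\Lambda_{\omega}}$. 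Using $s_j(R_N)\le(1+j)^{-1/p}\Vert R_N\Vert_{S_p^l}$ and $s_j(f(N_1)-f(N_2))\le s_j(R_N)+\Vert Q_N\Vert$, I would then choose $N$ with $1\le(1+j)^{-1/p}2^N\Vert N_1-N_2\Vert_{S_p^l}\le 2$ and invoke $\omega\le\omega_{\ast}$ to obtain the claimed bound.
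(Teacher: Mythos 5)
Your proposal matches the paper's intended argument: the paper states this theorem without proof, saying only that the key ingredient is the band-limited Lipschitz estimate for normal operators from \cite{E} --- exactly the input you isolate as the ``main obstacle'' --- to be combined with the Littlewood--Paley scheme of Theorem \ref{Thm:mainthm}. The only caveat, which the paper leaves equally implicit, is that \cite{E} is quoted in the $S_p$ norm while the argument needs (and you simply assert) the $S_p^l$ form, the same passage that \eqref{Ine:plnorm} provides in the self-adjoint case.
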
 
To prove this result, the following result is important(see \cite{E}):

There exists a constant $c>0$ such that for every bounded continuous function $f$ on $\mathbb{R}^{2}$ with
 \[
 \text{supp }\mathcal{F}f \subset \{\zeta\in\mathbb{C}:\abs{\zeta}\leq\sigma\}, \;\; \sigma>0,
 \]
and for arbitrary normal operators $N_1$ and $N_2$,
\[
   \Vert(f(N_1)-f(N_2)\Vert_{S_{p}}\leq c\;
  \sigma\Vert f \Vert_{L^{\infty}}\Vert N_1-N_2 \Vert_{S_{p}}.
\]
 
\begin{theorem}
Let n be a positive integer and $p \geq 1$. There exists a positive number $c_n$ such that for every modulus of continuity $\omega$, every $f$ in
$\Lambda_{\omega}(\mathbb{R}^n)$ and for arbitrary $n$-tuples of commuting self-adjoint operators
 $(A_1, ..., A_n)$ and $(B_1, ..., B_n)$,
the following inequality holds for all $l$ and for all $j$, $1\leq j\leq l :$
\begin{equation*}
   s_j(f(A_1, ..., A_n)-f(B_1, ..., B_n))\leq c_n
   \max_{1 \leq j \leq n}
   \omega_{\ast}\big((1+j)^{-\frac{1}{p}}\Vert A_j-B_j \Vert_{S_{p}^l}\big)
    \Vert f \Vert_{\Lambda_{\omega}}.
   \label{Ine:svalue_tuples}
\end{equation*}
\label{Thm:mainthm_tuples}
\end{theorem}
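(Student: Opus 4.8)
The plan is to follow verbatim the scheme of the proof of Theorem~\ref{Thm:mainthm}, substituting for each one-dimensional ingredient its $\mathbb{R}^n$ counterpart. The driving estimate is the multidimensional analogue of the bandlimited bound quoted for normal operators (its $n=2$ specialization), which I take in the form (see \cite{E}): there is a constant $c_n>0$ such that for every bounded continuous $f$ on $\mathbb{R}^n$ with $\textnormal{supp }\mathcal{F}f\subset\{t\in\mathbb{R}^n:\abs{t}\leq\sigma\}$ and for arbitrary commuting $n$-tuples $(A_1,\dots,A_n)$ and $(B_1,\dots,B_n)$ of self-adjoint operators,
\[
  \Vert f(A_1,\dots,A_n)-f(B_1,\dots,B_n)\Vert_{S_p}\leq c_n\,\sigma\,\Vert f\Vert_{L^\infty}\max_{1\leq k\leq n}\Vert A_k-B_k\Vert_{S_p}.
\]
As in the self-adjoint case, I would first reduce to bounded operators and bounded $f$, then fix a radial $C^\infty$ Littlewood--Paley decomposition on $\mathbb{R}^n$, writing $f=\sum_n f_n$ with $f_n$ bandlimited of radius comparable to $2^n$; the $\mathbb{R}^n$ analogues of \eqref{Ine:valleeinf} and \eqref{Ine:doubleUninf} supply $\Vert f_n\Vert_{L^\infty}\leq c\,\omega(2^{-n})\Vert f\Vert_{\Lambda_\omega}$ and $\Vert f-f*V_N\Vert_{L^\infty}\leq c\,\omega(2^{-N})\Vert f\Vert_{\Lambda_\omega}$.

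Next I would split $f(A)-f(B)=R_N+Q_N$ with $R_N=\sum_{n\leq N}\big(f_n(A_1,\dots,A_n)-f_n(B_1,\dots,B_n)\big)$ and $Q_N$ the corresponding tail $(f-f*V_N)$-term. The tail satisfies $\Vert Q_N\Vert\leq c\,\omega(2^{-N})\Vert f\Vert_{\Lambda_\omega}$ exactly as before. For $R_N$ the displayed bandlimited estimate applies to each $f_n$ with $\sigma\sim 2^n$; since $\max_k\Vert A_k-B_k\Vert_{S_p^l}$ is independent of $n$ it factors out of the sum, and combining with $\Vert f_n\Vert_{L^\infty}\leq c\,\omega(2^{-n})\Vert f\Vert_{\Lambda_\omega}$ and the summation bound $\sum_{n\leq N}2^n\omega(2^{-n})\leq c\,2^N\omega_*(2^{-N})$ from~\cite{A} gives
\[
  \Vert R_N\Vert_{S_p^l}\leq c_n\,2^N\,\omega_*(2^{-N})\,\max_{1\leq k\leq n}\Vert A_k-B_k\Vert_{S_p^l}\,\Vert f\Vert_{\Lambda_\omega}.
\]

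Finally, from $s_j(f(A)-f(B))\leq s_j(R_N)+\Vert Q_N\Vert\leq (1+j)^{-1/p}\Vert R_N\Vert_{S_p^l}+\Vert Q_N\Vert$ I would choose $N$ so that $1\leq (1+j)^{-1/p}2^N\max_k\Vert A_k-B_k\Vert_{S_p^l}\leq 2$; using $\omega(t)\leq\omega_*(t)$ on the $Q_N$-term collapses both contributions into $\omega_*\big((1+j)^{-1/p}\max_k\Vert A_k-B_k\Vert_{S_p^l}\big)$. Since $\omega_*$ is nondecreasing (writing $\omega_*(x)=\int_1^\infty \omega(xs)s^{-2}\,ds$ makes this transparent), one has $\omega_*\big((1+j)^{-1/p}\max_k\Vert A_k-B_k\Vert_{S_p^l}\big)=\max_{1\leq k\leq n}\omega_*\big((1+j)^{-1/p}\Vert A_k-B_k\Vert_{S_p^l}\big)$, which is precisely the asserted inequality (with the tuple index renamed from $j$ to $k$ to avoid the clash with the singular-value index).

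The main obstacle is the multidimensional bandlimited estimate itself: unlike the self-adjoint case handled via double operator integrals, functions of commuting $n$-tuples require multiple operator integrals, and one must verify the Lipschitz-type $S_p$ bound for bandlimited symbols with the perturbation measured by $\max_k\Vert A_k-B_k\Vert_{S_p}$ and with the correct linear growth in $\sigma$. This is the only place where the dimension enters nontrivially, through the constant $c_n$; everything downstream --- the $\mathbb{R}^n$ Littlewood--Paley bounds, the geometric summation, and the optimization in $N$ --- is a routine transcription of the one-dimensional argument.
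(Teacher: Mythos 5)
Your proposal is correct and matches the paper's intended approach: the paper states this theorem without proof, noting only that the key ingredient is the bandlimited Bernstein-type $S_p$ estimate for commuting $n$-tuples, and your argument is exactly the expected transcription of the proof of Theorem \ref{Thm:mainthm} using that estimate (Littlewood--Paley splitting into $R_N+Q_N$, geometric summation, optimization in $N$, and pulling the maximum through the nondecreasing function $\omega_{\ast}$). One small correction: the driving estimate for $n$-tuples is due to Nazarov and Peller \cite{G}, not \cite{E}, which treats the normal-operator (essentially $n=2$) case.
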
  
To prove this result, the following result is important(see \cite{G}):

There exists a constant $c_n>0$ such that for every bounded continuous function $f$ on $\mathbb{R}^{n}$ with
 \[
 \text{supp }\mathcal{F}f \subset \{\xi\in\mathbb{R}^{n}:\abs{\xi}\leq\sigma\}, \;\; \sigma>0,
 \]
and for arbitrary $n$-tuples of commuting self-adjoint operators
 $(A_1, ..., A_n)$ and $(B_1, ..., B_n)$,
\[
   \Vert f(A_1, ..., A_n)-f(B_1, ..., B_n)\Vert_{S_{p}}\leq c_n\;
  \sigma\Vert f \Vert_{L^{\infty}}\max_{1 \leq j \leq n}\Vert  A_j-B_j \Vert_{S_{p}}.
\]

\bibliographystyle{amsplain}

\end{document}